\font\smallit=cmti10
\renewcommand\section{\@startsection {section}{1}{\z@}
{-30pt \@plus -1ex \@minus -.2ex}
{2.3ex \@plus.2ex}
{\normalfont\normalsize\bfseries\boldmath}}
\renewcommand\subsection{\@startsection{subsection}{2}{\z@}
{-3.25ex\@plus -1ex \@minus -.2ex}
{1.5ex \@plus .2ex}
{\normalfont\normalsize\bfseries\boldmath}}
\renewcommand{\@seccntformat}[1]{\csname the#1\endcsname. }
\newtheorem{theorem}{Theorem}
\newtheorem{lemma}{Lemma}
\theoremstyle{remark}
\newtheorem{question}{Question} 
\newtheorem*{theoremunnumbered}{Theorem}
\newcommand{\NN}{\ensuremath{\mathbb N}}
\newcommand{\ZZ}{\ensuremath{\mathbb Z}}
\newcommand{\QQ}{\ensuremath{\mathbb Q}}
\newcommand{\RR}{\ensuremath{\mathbb R}}
\DeclareMathOperator{\Lim}{Lim}
\DeclareMathOperator{\ot}{ot}
\begin{document}
\begin{center}
\uppercase{\bf Representing Ordinal Numbers with Arithmetically Interesting
  Sets of Real Numbers}
\vskip 20pt
{\bf D. Dakota Blair}\\
{\smallit Department of Mathematics, The Graduate Center at the City University\\
 of New York, 365 Fifth Avenue New York, NY 10016 USA}\\
{\tt dakota@tensen.net}\\ 
\vskip 10pt
{\bf Joel David Hamkins}\\
{\smallit Professor of Logic, Oxford University \&\ Sir Peter Strawson Fellow,\\ University College, Oxford, High Street Oxford OX1 4BH U.K.}\\
{\tt joeldavid.hamkins@philosophy.ox.ac.uk}\\ 
\vskip 10pt
{\bf Kevin O'Bryant\footnote{Support for this project was provided by a PSC-CUNY Award, jointly funded by The Professional Staff Congress and The City University of New York.}}\\
{\smallit Department of Mathematics, College of Staten
  Island \&\ The Graduate Center at The City University of New York, Staten Island, NY 10314, USA}\\
{\tt obryant@gmail.com}\\ 
\end{center}


\centerline{\bf Abstract}
\noindent
For a real number $x$ and set of natural numbers $A$, define 
	\(x \ast A \coloneqq \{ x a \bmod 1: a\in A\}\subseteq [0,1).\)
We consider relationships between $x$, $A$, and the order-type of $x\ast A$. For example, for every irrational $x$ and countable order-type $\alpha$, there is an $A$ with $x\ast A \simeq \alpha$, but if $\alpha$ is a well order, then $A$ must be a thin set. If, however, $A$ is restricted to be a subset of the powers of 2, then not every order type is possible, although arbitrarily large countable well orders arise.

\pagestyle{myheadings} 
\thispagestyle{empty} 
\baselineskip=12.875pt 
\vskip 30pt


\section{Introduction}
For any real number $x$ and $A\subseteq \NN$, the set
	\[
	x \ast A \coloneqq  \{ x a \bmod 1: a\in A\} \subseteq [0,1)			
	\]
has long held interest for number theorists. Principally, the distribution of the sequence $(x a_i \bmod 1)_{i\in\NN}$ in the interval $[0,1)$ has impacted areas as diverse as the study of exponential sums and numerical integration.

In the present work, we consider the order type of the set $x \ast
A$. Technically, a \emph{well order} is an ordered set in which every nonempty set has a least element, and an \emph{ordinal} is the order type of a well-order. In this work, we use the terms interchangeably. We will make free use Cantor's
notation for ordinals. The reader may enjoy John Baez's lighthearted online introduction~\cites{Baez1,Baez2,Baez3}, or the more traditional~\cite{MR662564}.

First, we address a few trivialities. If $x$ is rational with denominator $q$, then 
	\(x\ast A\subseteq \left\{0,\frac 1q, \frac 2q, \ldots,\frac{q-1}q\right\},\) 
and so $x \ast A  \preceq q$ (when comparing ordinals, we use the
customary $\succ,\succeq,\prec, \preceq, \simeq$). Also, if $A$ is
finite, then $x\ast A \preceq |A|$. Conversely, if $x$ is irrational
and $A$ is infinite, then $x\ast A$ is infinite and countable.

The general problem we consider is which irrationals $x$, infinite
sets $A\subseteq \NN$, and countable order-types $\alpha$ have the relation
  \[ x\ast A \simeq \alpha.\]

The easiest examples, as often happens, arise from Fibonacci numbers. Let $\phi$ be the golden ratio and
	\[\scr{F} \coloneqq  \{F_2,F_3,F_4,\ldots\} = \{1,2,3,5, 8, 13, \ldots\}.\]
It is well-known that $|\phi F_n - F_{n+1}|\to 0$ monotonically, with
$\phi F_n - F_{n+1}$ alternating signs. Therefore, $\phi \ast \scr{F}$
has two limit points, 0 and 1, and consequently has the same order
type as $\ZZ$. Taking the positive even indexed Fibonacci numbers
	\[\scr{F}_{\text{even}} = \{F_{2i} : i \in \NN, i\ge 1 \}\]
and shifting by 1 yields some other small ordinals: for $k\geq 0$
\[
  \phi^{2k+2} \ast  (\scr{F}_{\text{even}}+1) \simeq \omega ,\qquad
  \phi^{2k+1} \ast (\scr{F}_{\text{even}}+1) \simeq \omega + 2\cdot k
\]
The observation that inspired us to undertake this study is that the ordinal property is preserved by taking sumsets, and in particular
        \[ x\ast h \scr{F}_{\text{even}} \simeq \omega^h.\]

Following each theorem statement, we indicate a related question we haven't been able to answer. Our first general result is that we can always ``solve'' for $A$, in a very strong sense.  
\begin{theorem}\label{thm:every order type arises}
Let $\alpha_0,\alpha_1,\ldots,\alpha_{k-1}$ be any countable order types, and let $x_0,\ldots,x_{k-1}$ be any irrational numbers with $1,x_0,x_1,\ldots,x_{k-1}$ linearly independent over $\QQ$. There is a set $A\subseteq\NN$ such that for $i\in\{0,1,\ldots,k-1\}$,
	\[ x_i \ast A \simeq \alpha_i .\]
The set $A$ can be taken arbitrarily thin, in the sense that for any $\Psi:\NN \to \NN$ tending to $\infty$, we can take $A$ to have 
	\( | A \cap [0,n) | \leq \Psi(n)\)
for all $n\in\NN$.
        
If every $\alpha$ is an ordinal, then $A$ must have density 0, but for any $\Theta:\NN \to \NN$ with $\Theta(n)/n \to 0$, we can take $A$ to have infinitely many $n\in \NN$ with
	\( |A \cap [0,n) | > \Theta(n).\)
\end{theorem}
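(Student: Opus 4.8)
The plan is a back-and-forth construction steered by equidistribution. Since $1,x_0,\dots,x_{k-1}$ are independent over $\QQ$, Weyl's equidistribution theorem gives that $n\mapsto(nx_0,\dots,nx_{k-1})\bmod 1$ is equidistributed in $[0,1)^k$, so every box $\prod_i(l_i,r_i)$ with all sides nonempty captures a set of $n$ of positive density (in particular, infinitely many, and of these arbitrarily large ones). Fix for each $i$ a set $D_i\subseteq[0,1)$ with $\ot(D_i)=\alpha_i$ together with an enumeration $D_i=\{d^{(i)}_0,d^{(i)}_1,\dots\}$, and build $A=\{a_0<a_1<\cdots\}$ recursively. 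Given $a_0,\dots,a_{j-1}$: by induction $a_m\mapsto x_ia_m\bmod 1$ is order-isomorphic to $a_m\mapsto d^{(i)}_m$ on $\{a_0,\dots,a_{j-1}\}$, so the position of $d^{(i)}_j$ among $d^{(i)}_0,\dots,d^{(i)}_{j-1}$ specifies a nonempty open interval $(l^{(i)}_j,r^{(i)}_j)\subseteq(0,1)$ into which $x_ia_j\bmod 1$ must fall (nonempty since the $x_ia_m\bmod 1$ are distinct points of $(0,1)$, as $x_i\notin\QQ$); pick $a_j>a_{j-1}$ with $(x_ia_j\bmod 1)_i\in\prod_i(l^{(i)}_j,r^{(i)}_j)$. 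In the limit $a\mapsto x_ia\bmod 1$ carries $A$, ordered by $a\prec_i a'\iff x_ia\bmod 1<x_ia'\bmod 1$, order-isomorphically onto $D_i$, so $x_i\ast A\simeq\alpha_i$. Because $a_j$ may be taken as large as we like, additionally requiring $a_j\ge 1+\max\{n:\Psi(n)\le j\}$ forces $|A\cap[0,n)|\le\Psi(n)$ for all $n$, giving the thinness assertion.

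For the density-$0$ claim, fix an $i$ with $\alpha_i$ an ordinal (one suffices). Then $x_i\ast A$ is well ordered, hence so is its closure $\overline{x_i\ast A}\subseteq[0,1]$: a strictly decreasing sequence in the closure tends to its infimum and can be perturbed term by term to a strictly decreasing sequence inside $x_i\ast A$, which is impossible. A well-ordered subset of $\RR$ is countable (each point with an immediate predecessor contributes a rational from the gap just below it, injectively, and an uncountable well-order has uncountably many such points), hence Lebesgue-null; being closed, it is therefore covered, for every $\varepsilon>0$, by finitely many intervals of total length $<\varepsilon$. Since $A\subseteq\{n:x_in\bmod 1\in\overline{x_i\ast A}\}$, equidistribution of $(nx_i\bmod 1)_n$ gives $\limsup_N|A\cap[0,N)|/N\le\varepsilon$, and letting $\varepsilon\to 0$ shows $A$ has density $0$.

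For the $\Theta$-statement (the substantive case being when the $\alpha_i$ are infinite ordinals, which I assume), write $\alpha_i=\omega+\beta_i$, run the construction to obtain $A'$ with $x_i\ast A'\simeq\alpha_i$, and let $s^{(i)}_0<s^{(i)}_1<\cdots$ be the first $\omega$ elements of $x_i\ast A'$. For $t\ge 1$ set $B_t:=\prod_i(s^{(i)}_{t-1},s^{(i)}_t)\subseteq[0,1)^k$; this is a fixed box of positive volume $v_t$ (determined by $A'$ alone), the $B_t$ are pairwise disjoint, and no $a\in A'$ has $(x_ia\bmod 1)_i\in B_t$. By equidistribution there are $\ge M_t$ integers $a<n_t$ with $(x_ia\bmod 1)_i\in B_t$ once $n_t\gtrsim M_t/v_t$; choose $M_t$ large enough that moreover $\Theta(n_t)<M_t$ (possible since $v_t$ is fixed and $\Theta(m)/m\to 0$), let $E_t$ be $M_t$ such integers, and put $A:=A'\cup\bigcup_tE_t$. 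Then for each $i$, $x_i\ast A$ reads $s^{(i)}_0<(\text{finite block})<s^{(i)}_1<(\text{finite block})<\cdots$ followed by the untouched order-$\beta_i$ tail of $x_i\ast A'$; the initial portion is a strictly increasing bounded sequence, hence of type $\omega$, so $x_i\ast A\simeq\omega+\beta_i=\alpha_i$, while $|A\cap[0,n_t)|\ge M_t>\Theta(n_t)$ for all $t$.

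The routine engine throughout is Kronecker–Weyl equidistribution — orbit density for the basic construction and thinness, the quantitative form for both density statements. The step I expect to need the most care is the last: inserting arbitrarily large finite bursts of integers without disturbing any of the $k$ order types at once. The decomposition $\alpha_i=\omega+\beta_i$, valid for every infinite ordinal, is what makes the bursts harmless, since placing finitely many new points in each gap of an $\omega$-sequence again yields an $\omega$-sequence; the crux is checking that a single choice of bursts serves all $i$ simultaneously and is large enough to beat $\Theta$.
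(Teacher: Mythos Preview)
Your proof is correct and follows the same overall strategy as the paper's: Weyl equidistribution to place points in prescribed target intervals for the basic construction and the thinness claim, a measure/equidistribution argument for the density-$0$ claim, and insertion of large finite bursts below the first limit point for the $\Theta$-statement.

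The details differ in two places, both harmless. For density $0$, the paper explicitly partitions $(0,1)\setminus(x\ast A)$ into countably many ``gap'' intervals $J_i$ attached to the elements of $x\ast A$ and sums their lengths to $1$; you instead observe that the closure $\overline{x_i\ast A}$ is compact, well ordered, hence countable and null, and then cover it by finitely many short intervals. Both arguments reduce to ``the complement has full measure, so equidistribution forces $d(A)=0$.'' For the $\Theta$-statement, the paper places its bursts in nested intervals $(z_i-z_i/2^m,\,z_i)$ shrinking to the least limit point $z_i$, whereas you place them in the successive gaps $(s^{(i)}_{t-1},s^{(i)}_t)$ of the initial $\omega$-segment; either way the augmented initial segment still has only the single limit point $z_i=\sup_t s^{(i)}_t$ and hence retains type $\omega$, leaving the $\beta_i$-tail intact. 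Your one-point-at-a-time recursion in the first paragraph is if anything tidier for $k\ge 2$ than the paper's choice of one $n_{\vec m}$ per tuple $\vec m\in\NN^k$.
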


\begin{question}
  Is there a stronger way to say ``$A$ cannot be arbitrarily thick''? For example, it seems plausible that it is always possible to choose $A$ so that there is a positive constant $C$ with $|A \cap [0,n)| \geq C \log n$ for all $n$, while it seems implausible that we can always take $A$ so that $|A \cap [0,n)| \geq C\frac n{\log n}$.

  The condition that \emph{every} $\alpha$ is an ordinal, in the last paragraph of Theorem~\ref{thm:every order type arises}, is strictly stronger than is needed. Unfortunately, we have not found a nice way to express the actual requirement.
\end{question}

\begin{theorem} \label{thm:sumsets}
Let $x$ be an irrational number, and let $0\leq a_0<a_1< \cdots$ be a sequence of integers with $a_i x \bmod 1$ increasing to 1 monotonically. Let $A=\{a_0,a_1,\ldots\}$, and for any positive integer $h$ let $hA$ be the $h$-fold sumset of $A$. Then
	\[x \ast hA \simeq \omega^h.\]
\end{theorem}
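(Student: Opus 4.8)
The plan is to work with the \emph{unreduced} version of the set. Write $\theta_i := a_i x \bmod 1$, so $0 \le \theta_0 < \theta_1 < \cdots$ and $\theta_i \to 1$, and set
\[
  S_h := \{\theta_{i_1} + \cdots + \theta_{i_h} : 0 \le i_1 \le i_2 \le \cdots \le i_h\} \subseteq [0,h).
\]
Since addition is commutative, $hA = \{a_{i_1} + \cdots + a_{i_h} : i_1 \le \cdots \le i_h\}$, and $x(a_{i_1}+\cdots+a_{i_h}) \bmod 1 = (\theta_{i_1}+\cdots+\theta_{i_h}) \bmod 1$; thus $x \ast hA$ is exactly the image of $S_h$ under $t \mapsto t \bmod 1$. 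Splitting off $\lfloor t \rfloor \in \{0,1,\dots,h-1\}$ gives the ``wrap-around'' decomposition $x \ast hA = \bigcup_{r=0}^{h-1}\bigl((S_h \cap [r,r+1)) - r\bigr)$, and likewise $(x\ast hA)\cap[0,c) = \bigcup_{r=0}^{h-1}\bigl((S_h\cap[r,r+c))-r\bigr)$ for every $c \le 1$. Note $\sup S_h = h \notin S_h$; also $x\ast hA$ is well ordered and $\sup(x\ast hA) = 1 \notin x\ast hA$ (e.g.\ because $(h\theta_i)\bmod 1 \to 1$), both via the same ``fix the value of $r$'' argument used below. The point of passing to $S_h$ is that it carries no wrap-around, so it is far more rigid than $x \ast hA$.

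First I would prove, by induction on $h$, the following about $S_h$ (phrased so that it applies verbatim to every tail sequence $(\theta_i,\theta_{i+1},\dots)$, which is what makes the induction close up): \emph{$S_h$ is well ordered, $\ot(S_h) = \omega^h$, and $\ot\!\bigl(S_h \cap [0,c)\bigr) < \omega^h$ for every $c < h$.} The base case $h=1$ is immediate, since $S_1 = \{\theta_i\}$ is a strictly increasing sequence. Well-orderedness in general is easy: a strictly decreasing sequence in $S_h$, restricted to a fixed value of the (necessarily bounded) smallest index $i_1$, produces one among the $\theta_j$. For the initial-segment bound I would decompose $S_h \cap [0,c)$ by the smallest index: since $\theta_{i_1}+\cdots+\theta_{i_h} < c$ forces $h\theta_{i_1} < c$, and $\theta_i \to 1$, the index $i_1$ ranges over the finite set $\{i : h\theta_i < c\}$, and collecting the remaining terms,
\[
  S_h \cap [0,c) \;=\; \bigcup_{i : h\theta_i < c} \Bigl(\theta_i + \bigl(S^{(i)}_{h-1}\cap[0,c-\theta_i)\bigr)\Bigr),
\]
where $S^{(i)}_{h-1}$ is the copy of $S_{h-1}$ built from $(\theta_i,\theta_{i+1},\dots)$. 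Each term is an order-preserving translate of an initial segment of $S^{(i)}_{h-1}$ — either a proper one lying below $h-1$, or all of $S^{(i)}_{h-1}$ — so by induction has order type $\le \omega^{h-1}$. Since a finite union of bounded well-ordered subsets of $\RR$, each of order type $< \omega^h$, again has order type $< \omega^h$ (its $h$-th Cantor--Bendixson derivative lies in the union of those of the pieces, hence is empty), we get $\ot(S_h\cap[0,c)) < \omega^h$, and $\sup_{c<h}$ gives $\ot(S_h) \le \omega^h$. For the reverse inequality I would build an order-preserving copy of $\omega^h = \omega^{h-1}\cdot\omega$ inside $S_h$ as a concatenation of blocks $G_n = \theta_{m_n} + F_n$, taking $m_0 < m_1 < \cdots$ to grow fast enough that the blocks are linearly ordered, with $F_n$ the top final segment $\{s \in S^{(m_n)}_{h-1} : s > (h-1)-\delta_n\}$; the complementary initial segment lies below $h-1$ and so has order type $< \omega^{h-1}$, whence by additive indecomposability of $\omega^{h-1}$, $\ot(F_n) = \omega^{h-1}$, and the concatenation has order type $\omega^{h-1}\cdot\omega = \omega^h$.

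Granting this, the theorem follows. For $\ot(x\ast hA) \ge \omega^h$: the $r = h-1$ term $(S_h\cap[h-1,h)) - (h-1)$ is order-isomorphic to $S_h$ with an initial segment of order type $< \omega^h$ deleted, hence — $\omega^h$ being additively indecomposable — has order type exactly $\omega^h$. For $\ot(x\ast hA) \le \omega^h$: since $\sup(x\ast hA) = 1 \notin x\ast hA$, one has $\ot(x\ast hA) = \sup_{c<1}\ot\!\bigl((x\ast hA)\cap[0,c)\bigr)$, and for $c < 1$ the set $(x\ast hA)\cap[0,c)$ is the union of the $h$ sets $(S_h\cap[r,r+c))-r$ ($0 \le r \le h-1$), each an order-preserving translate of a subset of $S_h \cap [0,r+c)$ with $r+c < h$, hence of order type $< \omega^h$; so $\ot((x\ast hA)\cap[0,c)) < \omega^h$ and the supremum is $\le \omega^h$. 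Combining, $x \ast hA \simeq \omega^h$.

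The main obstacle is the upper bound $\ot(x\ast hA) \le \omega^h$ (and the analogous bound for $S_h$). The wrap-around decomposition exhibits $x\ast hA$ as a union of $h$ well-ordered pieces, each of which can individually already have order type as large as $\omega^h$, so a direct union estimate yields only something like $\omega^h\cdot h < \omega^{h+1}$ — too weak. The resolution is to apply the union estimate not to the pieces themselves but to their \emph{proper} initial segments $(\,\cdot\,)\cap[0,c)$, where truncation strictly below the maximal attainable value forces order type $< \omega^h$, and only then pass to the supremum over $c$. Isolating this as the three-part inductive statement about $S_h$ (well ordered; order type $\omega^h$; proper initial segments below $h$ of order type $< \omega^h$) is precisely what lets the induction on $h$ go through; everything else — well-orderedness, the Cantor--Bendixson bookkeeping, the explicit embeddings — is routine.
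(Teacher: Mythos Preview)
Your argument is correct, and it takes a genuinely different route from the paper's. The paper works directly with $x\ast hA$ and computes its derived set: it shows that $\Lim(x\ast hA)=\{1\}\cup\bigcup_{r=1}^{h-1}x\ast rA$, invokes the general fact that for $X\subseteq[0,1)$ with $1\in\Lim(X)$ one has $X\simeq\omega\cdot\Lim(X)$, and then inducts on $h$. You instead lift to the unreduced sumset $S_h\subseteq[0,h)$, strengthen the inductive hypothesis to include the initial-segment bound $\ot(S_h\cap[0,c))<\omega^h$ for $c<h$, and only at the end push back through the $\bmod\,1$ map. The paper's approach is shorter once the $X\simeq\omega\cdot\Lim(X)$ lemma is in hand, and absorbs the wrap-around into that lemma; your approach is more self-contained, avoids the casework needed to pin down \emph{all} limit points of $x\ast hA$, and isolates a sharper auxiliary statement (the strict bound on proper initial segments) that does the real work in the upper bound. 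One small remark: your Cantor--Bendixson justification for ``finite union of well-ordered sets of order type $<\omega^h$ has order type $<\omega^h$'' is a little telegraphic, since the pieces are not closed; it is cleanest to phrase it via the Hessenberg sum (the natural sum of ordinals $<\omega^h$ is $<\omega^h$), or to pass to closures first and note $(\overline{X\cup Y})^{(h)}=(\overline X)^{(h)}\cup(\overline Y)^{(h)}$.
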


\begin{question}
If $x\ast A$ is an ordinal, then $x\ast hA$ must be an ordinal, too. Can one give bounds on that ordinal?
\end{question}

\begin{theorem} \label{thm:powers}
  Fix  $b\geq 2$, and set $B=\{b^i : i\in \NN\}$. For any countable ordinal $\alpha$, there is an $x$ with $x\ast B \succeq \alpha$. There is no $x$ with $x\ast B$ an ordinal and $\omega \preceq x\ast B \prec \omega^2$.
\end{theorem}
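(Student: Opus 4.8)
Throughout write $x\ast B=\{T^i x:i\in\NN\}$, where $T\colon[0,1)\to[0,1)$ is $T(y)=by\bmod 1$; if $x=\sum_{k\ge1}d_kb^{-k}$ is the base-$b$ expansion, then $T^i x=\sum_{k\ge1}d_{k+i}b^{-k}$, so $x\ast B$ is the set of tails of the digit string $(d_k)$, compared as reals, which for irrational $x$ (the only nontrivial case — rational $x$ gives a finite orbit) is the lexicographic order on tails. For the first claim, the model example is the irrational $x$ with digit string $D_1D_2D_3\cdots$, $D_n=1^n0$, i.e.\ $x=0.\,1\,0\,1\,1\,0\,1\,1\,1\,0\cdots$ in base $b$. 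A tail beginning at offset $n-m$ inside the block $D_n$ equals $t(m,n)=1^m0\,1^{\,n+1}0\,1^{\,n+2}0\cdots$ for $n\ge1$, $0\le m\le n$, and a digit comparison shows $t(m,n)<t(m',n')$ exactly when $(m,n)<(m',n')$ lexicographically; since for each $m$ the admissible $n$ form an $\omega$-chain, $x\ast B\simeq\sum_{m<\omega}\omega=\omega^2$. To reach larger ordinals I would iterate this ``ascending blocks'' device so as to pass from a string realising order type $\tau$ to one realising (roughly) $\omega\cdot\tau$, and at a limit $\lambda=\sup_n\lambda_n$ splice strings realising $\tau_0<\tau_1<\cdots$, each prefixed by a long run of a reserved digit and so confined to a tiny sub-interval, with these sub-intervals placed in increasing order, yielding order type $\sup_n\tau_n$. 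The order types so obtained include $\omega^2$ and are closed under $\tau\mapsto\omega\cdot\tau$ and under suprema of increasing $\omega$-sequences, hence are cofinal in $\omega_1$, which gives for each countable $\alpha$ an $x$ with $x\ast B\succeq\alpha$. The delicate point is that a tail may straddle a block boundary and so fail to be a tail of any one block; one keeps this in order by arranging that the continuation past each block converges monotonically from below to a single fixed tail, so crossing a boundary perturbs a tail only order-consistently.

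For the second claim, suppose $x\ast B$ is an ordinal with $\omega\preceq x\ast B\prec\omega^2$ and put $O=x\ast B$, an infinite well-ordered set. Let $L\subseteq[0,1]$ be its set of limit points. Since $O$ has no infinite descending chain, neither does $L$, so $L$ is well-ordered; were $L$ infinite, each of its points being approached from below by $O$ would force an $\omega$-chain of $O$ between consecutive points of $L$, so $\ot(O)\ge\omega\cdot\ot(L)\ge\omega^2$, contrary to hypothesis. Thus $L$ is finite (and nonempty, by compactness), and $0\notin L$ because a sequence in $O$ tending to $0$ would be eventually strictly decreasing. Let $\mu=\min L>0$. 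As $\mu$ is approached only from below, pick $O$-points $r_k\nearrow\mu$, $r_k=T^{m_k}x$ with $m_k\ge1$; each $T^{m_k-1}x$ is one of the $b$ points $(r_k+j)/b$, so by the pigeonhole principle a fixed branch $j^\ast$ occurs for infinitely many $k$, and then the $O$-points $(r_k+j^\ast)/b$ increase to $(\mu+j^\ast)/b\in L$, a limit point mapping onto $\mu$ under $T$ (no break point $j/b$ is met among the relevant preimages). Iterating inside the finite set $L$ forces a repetition in this backward chain, so $\mu$ is a periodic point of $T$; lying in $(0,1]$ and nonzero, $\mu=0.\overline{e_1\cdots e_d}$ with period neither all $0$ nor, if $\mu<1$, all $b-1$ — the case $\mu=1$ being period $\overline{b-1}$.

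Now run a second pigeonhole. Taking $O$-points $r_k\nearrow\mu$, their expansions agree with $0.\overline{e_1\cdots e_d}$ through position $N_k=dt_k+s_k$ with $N_k\to\infty$ and then have a strictly smaller digit; fix a residue $s_k\equiv s$ occurring infinitely often. Then $y_k:=T^{dt_k}(r_k)\in O$ has expansion $e_1\cdots e_s\,g_k\cdots$ with $g_k<e_{s+1}$, so $y_k\le0.e_1\cdots e_se_{s+1}$ (a terminating value), and this is $<\mu$ since $\mu$'s period is not eventually $0$. As the exponents $dt_k+m_k$ tend to infinity there are infinitely many distinct $y_k$, so they cluster at some $\nu\le0.e_1\cdots e_se_{s+1}<\mu$ with $\nu\in L$, contradicting $\mu=\min L$. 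Hence no $x$ makes $x\ast B$ an ordinal in $[\omega,\omega^2)$.

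The crux of the second claim is this double use of pigeonhole — the branch pigeonhole to locate a periodic limit point, then a residue pigeonhole after a shift to pin a limit point strictly below $\min L$ — together with checking that the break points $j/b$ of $T$ cause no trouble (they would require $\sup O=1$, which is allowed only in the case $\mu=1$ and is otherwise blocked by $0\notin L$). For the first claim the real difficulty is organisational: carrying out the transfinite recursion with only finitely many digits available and keeping boundary-straddling tails correctly ordered at limit stages.
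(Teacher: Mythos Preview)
Your argument for the second claim (no ordinal in $[\omega,\omega^2)$) is correct and takes a genuinely different route from the paper's. The paper proves this as Lemma~\ref{lem:void} by invoking the classical fact that a nonperiodic word has unbounded factor complexity: if $C(n)$, the number of length-$n$ subwords occurring infinitely often, were bounded, then $C(m)=C(m+1)$ for some $m$, and a short directed-graph argument forces eventual periodicity; since $C$ is unbounded, the pairwise disjoint cylinders indexed by these subwords each trap a limit point of the shift orbit, giving infinitely many. Your approach is dynamical rather than combinatorial: you assume $L$ is finite, run a preimage pigeonhole inside $L$ to exhibit $\min L$ as a $T$-periodic point, and then use a residue pigeonhole on the shifted tails to produce a limit point strictly below $\min L$. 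The complexity proof is shorter and ties the result to a well-known principle in symbolic dynamics; your argument is more self-contained and makes the role of periodic limit points explicit. Both give exactly the same conclusion.

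For the first claim your proposal is only a sketch, and you yourself flag the missing piece: the transfinite recursion with successor step ``$\tau\mapsto\omega\cdot\tau$'' and limit step ``splice prefixed blocks in increasing sub-intervals'' is a sound strategy, but you have not actually carried out the limit step, and the boundary-straddling tails there are genuinely delicate when only $b$ digits are available (your ``reserved digit'' cannot be fresh at every stage of an $\omega_1$-length recursion). The paper avoids transfinite recursion entirely. Its Lemma~\ref{lem:big ordinals} gives a single explicit construction which, from any countable list of words $w_0,w_1,\dots$ with ordinal order types, produces one base-$2$ word $W$ with $\ot(W)\succeq\ot(w_i)$ for every $i$: one first passes to a larger alphabet via morphisms so that two digits can serve purely as separators and markers (this is what tames the straddling tails), builds a single word containing all finite factors of the marked words, verifies it is well-ordered, and then returns to base $2$ via Lemma~\ref{lem:base doesn't matter}. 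From this one lemma, cofinality in $\omega_1$ follows by the usual argument (a countable family of realizable ordinals is dominated by a realizable ordinal), with no induction over $\omega_1$ to organise.
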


\begin{question}
  Are there other voids, or can every countable ordinal at least as large as $\omega^2$ be represented? For example, can $\omega^2+1$ be represented with powers of 2?
\end{question}


\section{Proofs}

\begin{proof}[Proof of Theorem~\ref{thm:every order type arises}.]
We use a theorem of Weyl~\cite{Weyl}.
\begin{theoremunnumbered}[Weyl's Equidistribution Theorem]
If $1,x_0,x_1,\ldots,x_{k-1}$ are linearly independent over $\QQ$, then for any intervals $I^{(i)}$, $(0\leq i < k)$ with lengths $\lambda(I^{(i)})$,
	\[\lim_{N\to\infty} \frac{\left|\{n : 0\leq n < N, nx_i \in I^{(i)},  0 \leq i < k\} \right|}{N} = \prod_{0\leq i <k} \lambda(I^{(i)}).\]
\end{theoremunnumbered}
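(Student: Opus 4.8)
The plan is to prove this by way of \emph{Weyl's criterion} (the exponential-sum characterization of equidistribution), which reduces the equidistribution of the vector sequence $\mathbf v_n = (n x_0, n x_1, \ldots, n x_{k-1})$ in the torus $\mathbb T^k = (\RR/\ZZ)^k$ to the decay of one-dimensional exponential sums. Note first that the conclusion of the theorem is exactly the assertion that $(\mathbf v_n)$ is equidistributed in $\mathbb T^k$ with respect to Lebesgue measure $\lambda$, since a box $R = \prod_{0\leq i<k} I^{(i)}$ has $\lambda(R) = \prod_{0\leq i<k}\lambda(I^{(i)})$.

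First I would state and prove the direction of Weyl's criterion that is needed here: if for every nonzero $\mathbf m = (m_0,\ldots,m_{k-1})\in\ZZ^k$ one has $\frac1N\sum_{n=0}^{N-1} e^{2\pi i \langle \mathbf m,\mathbf v_n\rangle}\to 0$, then $(\mathbf v_n)$ is equidistributed. This goes in three standard moves. (i) By linearity, the hypothesis gives $\frac1N\sum_{n<N} P(\mathbf v_n)\to \int_{\mathbb T^k} P\,d\lambda$ for every trigonometric polynomial $P$, the constant term surviving and every other frequency dying. (ii) By the Stone--Weierstrass theorem the trigonometric polynomials are uniformly dense in $C(\mathbb T^k)$, so the convergence $\frac1N\sum_{n<N} f(\mathbf v_n)\to\int f\,d\lambda$ persists for all continuous $f$. (iii) Given the box $R$ and $\varepsilon>0$, sandwich the indicator $\mathbf 1_R$ between continuous functions $g\leq \mathbf 1_R\leq h$ with $\int(h-g)\,d\lambda<\varepsilon$, and conclude that $\limsup_N$ and $\liminf_N$ of $\frac1N|\{n<N : \mathbf v_n\in R\}|$ both lie within $\varepsilon$ of $\lambda(R)$; letting $\varepsilon\to0$ finishes it.

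It then remains to check the hypothesis of the criterion, and this is where the arithmetic assumption enters. For a nonzero $\mathbf m\in\ZZ^k$ we have $\langle\mathbf m,\mathbf v_n\rangle = n\theta$ with $\theta = m_0x_0+\cdots+m_{k-1}x_{k-1}$; the $\QQ$-linear independence of $1,x_0,\ldots,x_{k-1}$ forces $\theta$ to be irrational, so in particular $e^{2\pi i\theta}\neq 1$. Hence $\sum_{n=0}^{N-1} e^{2\pi i n\theta}$ is a geometric sum of modulus at most $2/|e^{2\pi i\theta}-1|$, a bound independent of $N$, and dividing by $N$ drives it to $0$. This is exactly what the criterion needs.

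I expect the main obstacle to be expository rather than mathematical: being careful about the passage from the half-open formulation on $[0,1)$ in the statement to functions on the torus $\mathbb T^k$, about the boundary of the box $R$ (which is $\lambda$-null, so harmless, but must be acknowledged in step (iii)), and about the fact that it is $\limsup$ and $\liminf$, not a limit known in advance, that get squeezed. The Fourier-analytic core --- the geometric-sum estimate powered by irrationality of $\theta$ --- is short and completely robust; the density statement and the approximation of $\mathbf 1_R$ from above and below are routine but are the places where one actually has to do a little work.
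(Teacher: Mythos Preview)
Your proposal is correct and is the standard proof via Weyl's criterion. However, the paper does not prove this statement at all: it is quoted as a classical result (with a citation to Weyl~\cite{Weyl}) and used as a black box in the proof of Theorem~\ref{thm:every order type arises}. So there is no ``paper's own proof'' to compare against; your write-up simply supplies an argument the authors chose to omit.
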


Suppose that $I_0, I_1, \dots$ is a sequence of disjoint nonempty intervals. Then this sequence has an order type, where we say that interval $I_i$ is less than interval $I_j$ if every element of $I_i$ is less than every element of $I_j$.\footnote{Suppose that $a_0,a_1,\ldots$ are comparable objects. Define $f(a_i)$ by $f(a_0)=(1/3,2/3)$, and $f(a_i)$ to be any interval in $(0,1)$ that has a positive distance from each of  $f(a_0),f(a_1),\ldots,f(a_{i-1})$ and is in the correct gap that so that $a_i,a_j$ have the same order as $f(a_i),f(a_j)$, for all $0\leq j <i$. Then the intervals $\{f(a_i) : i\in \NN\}$ have the same order type as the $a_0,a_1,\ldots$.}  Since the rational line is universal for countable orders, we can realize each of the countable order types $\alpha_i$ as the order type of a sequence $I_0^{(i)},I_1^{(i)}, \ldots$ of disjoint nonempty open intervals. By Weyl's theorem, which requires our irrationality condition on the $x_i$, for each $k$-tuple of natural numbers $\vec m = \langle m_0,m_1,\ldots,m_{k-1}\rangle$, the set
\begin{equation}\label{eq:define n_m}
  \left\{ n \in \NN : n x_i \bmod 1 \in I_{m_i}^{(i)}, 0 \leq i <k \right\}
\end{equation}
is infinite, and we set $n_{\vec m}$ to be any element of it. In particular, for each $i$, each of the intervals $I_0^{(i)},I_1^{(i)}, \ldots$ contains exactly one point of the form $n_{\vec m} x_i$ (as $\vec m$ varies). 
This means that the set
	\[
	A\coloneqq  \left\{ n_{\vec m} : \vec m \in \NN^k \right\}
	\]
has the needed property: $x_i \ast A \simeq \alpha_i$. Since the sets in \eqref{eq:define n_m} are infinite, we can choose $n_{\vec m}$ so as to make $|A \cap [0,n)| \leq \Psi(n)$.

Now, assume that $x\ast A$ is an ordinal (it is enough to show for $k=1$). We need to show that the density of $A$ is 0.  Since $x\ast A$ is an ordinal, for each $z$ the set $\{y\in x\ast A : z<y\}$ is either empty or has a least element. That is, each $z\in x\ast A$ is either the maximal element of $x\ast A$ or else has a successor. Let $z_0,z_1,\ldots$ be an enumeration of $x\ast A$. If $z_i$ has a predecessor and a successor, then set $J_i=(z_i,z_i^+)$, where $z_i^+$ is the successor of $z_i$. If $z_i$ has a predecessor but no successor, then set $J_i=(z_i,1)$. If $z_i$ does not have a predecessor but does have a successor, then set 
	$$J_i=\bigg(\limsup \big(x\ast A\cap[0,z_i)\big), z_i\bigg) \cup (z_i, z_i^+).$$ 
If $z_i$ has neither a predecessor nor a successor, then set $J_i=(0,z_i)\cup(z_i,1)$.

We have partitioned $(0,1)$ into $(x\ast A)\setminus\{0\}$ and $J_0,J_1,\ldots$. The disjoint open intervals making up $J_0,J_1,\ldots$ cover almost all of [0,1), i.e., $\sum_{i=0}^\infty \lambda(J_i)=1$. The sets
	\[A_i \coloneqq  \{ k \in \NN : x k \bmod 1 \in J_i\}\]
are pairwise disjoint because the $J_i$ are, and $d(A_i) = \lambda(J_i)$ by Weyl's Theorem, and $A_i \cap A = \emptyset$ by construction. Thus, the complement of $A$ is $\cup_i A_i$, and $d(\cup_i A_i) = \sum_i \lambda(J_i) = 1$. The set $A$ must have density 0.

Assuming now that all of the $\alpha_i$ are ordinals, we show how to augment $A$ so as to have $|A\cap[0,n)| > \Theta(n)$ for infinitely many $n$ without changing the ordinals. Let $z_i$ be the smallest limit point of $x_i\ast A$, which must exist as $x_i\ast A$ is infinite, and must be strictly positive as $x_i\ast A$ does not have an infinite decreasing subsequence. Let $J_0^{(i)}\coloneqq (0,z_i)$. By Weyl's Theorem, the set
	\[ A_0 \coloneqq  \{ n \in \NN : 0 \leq n, \, nx_i \bmod 1 \in J_0^{(i)} ,0\leq i < k\}\]
has density $z\coloneqq z_0\cdots z_{k-1}$, which is positive, and so $|A_0 \cap [0,n)| > (z/2) n$ for all sufficiently large $n$. Choose $n_0$ so that $\Theta(n_0) < (z/2) n_0 < |A_0 \cap [0,n_0)|$, which is possible by the hypothesis that $\Theta(n)/n \to 0$. Let
\[A_0' = A_0 \cap [0,n_0],\]
so that $|A_0'| > (z/2) n_0$ and $x_i\ast A_0' \subseteq  J_0^{(i)}$.
Now for $m\geq 1$ set $J_m^{(i)}=(z_i-z_i/2^m,z_i)$, and
  \[A_m \coloneqq  \{ n \in \NN : n_{m-1} \leq n, \, nx_i \bmod 1 \in J_m^{(i)}, 0 \leq i < k\}, \]
a set with density $z/2^{mk}$. Choose $n_m>n_{m-1}$ so that
	\[ \Theta(n_m) < \frac{z}{2\cdot 2^{mk}} n_m < |A_m \cap [0,n_m) |. \]
Set $A_m' = A_m \cap [n_{m-1},n_m)$ Then the set $x_i \ast \cup_m A_m'$ has order type $\omega$ for each $i$, and counting function that exceeds $\Theta(n)$ at each of $n_0,n_1,\ldots$. We can replace $A$ with  $A\cup \bigcup_m A_m'$, and $A$ has the same order type as before, and now has a counting function guaranteed to beat $\Theta$.
\end{proof}

\begin{proof}[Sketch of Proof of Theorem~\ref{thm:sumsets}.]
  For $X$ a set of real numbers, let $\Lim(X)$ be the derived set of $X$, i.e., the set of limit points of $X$. If $X$ is an ordinal, then so is $\Lim(X)$. If $X$ is also infinite and bounded, then
  $$X \preceq \omega \cdot \Lim(X) \preceq X+\omega.$$ 
If $X\subseteq[0,1)$ and $1\in \Lim(X)$, then $X \simeq \omega \cdot
\Lim(X)$.

Theorem~\ref{thm:sumsets} is clearly true for $h=1$; assume henceforth that $h\geq2$. We first prove that $x\ast hA$ is contained in $[0,1)$, is an infinite ordinal, has 1 as a limit point, and that $\Lim(x\ast hA) =\{1\} \cup \bigcup_{r=1}^{h-1} x\ast rA$. From this we conclude by induction that $x\ast hA \simeq \omega \cdot \left( \bigcup_{r=1}^{h-1} x\ast rA \right) \simeq \omega^h$. 

By definition of ``$\ast$'', clearly $x\ast hA \subseteq[0,1)$. That
$x\ast hA$ is an infinite ordinal is a combination of the following
observations: $x\ast hA = h(x\ast A) \bmod 1$; if $X_i$ are ordinals,
then so is $\sum X_i$; if $X\subseteq \RR$ is bounded and an ordinal,
then so is $X\bmod 1$.

The elements of $hA$ have the form $a_{i^{(0)}}+a_{i^{(1)}}+\cdots+a_{i^{(h-1)}}$ with $i^{(0)} \leq i^{(1)} \leq \cdots \leq i^{(h-1)}$. Suppose that we have a sequence (indexed by $j$) in $x\ast hA$ that converges to $L$:
  \[ z_j \coloneqq  x \big( a_{i_j^{(0)}}+a_{i_j^{(1)}}+\cdots+a_{i_j^{(h-1)}} \big)  \bmod 1 \to L \in [0,1].\]
If $i_j^{(0)}\to\infty$ for this sequence, then each $a_{i_j^{(k)}}$ goes to infinity for $k\in\{0,\ldots,h-1\}$. As $x \ast a_i x \mod 1$ goes to $1$ from below, we know that $L=1$. Otherwise, we an pass to a subsequence on which $i_j^{(0)} =i^{(0)}$ is constant. Either $i_J^{(1)}$ is unbounded, in which case $L=a_{i^{(1)}}x\mod 1$, or we can pass to a subsequence on which $i_j^{(1)}=i^{(1)}$ is constant. Repeat for $i^{(2)}$, and so on, to get that the limit points are 1 and 
	\[ \bigcup_{r=1}^{h-1} x\ast rA. \]
\end{proof}

\section{When the multiplying set consists of powers of $b$}

Multiplying a real $x$ by a power of $b$ and reducing modulo 1 is just a shift of the base $b$ expansion of $x$. Consequently, in this section, we obtain some economy of thought and exposition if we consider the following equivalent\footnote{Not quite equivalent. The reals $0.0\overline{1}$, $0.1\overline{0}$ (in base 2)  are the same, while the words ${01^\omega}=0111\cdots $, $10^\omega=1000\cdots$ are not equal. However, since we only consider irrational reals (a property preserved by shifting), the non-uniqueness of $b$-ary expansions never arises.} formulation of the problem.

For (possibly infinite) words $W=w_0w_1w_2\cdots$ and $V=v_0v_1v_2\cdots$ with $w_k,v_k \in \NN$, we define $W<V$ if $i=\inf\{k\in\NN : w_k \neq v_k \}$ is defined and $w_i<v_i$. Moreover, we call $1/2^i$ the \emph{distance} between $W$ and $V$. Note that $W=01$ and $V=011$, for example, are incomparable in this ordering, as $w_2$ is not defined, much less satisfying $w_2<v_2$. We define the shift map $\sigma$ by $\sigma(w_0w_1w_2\cdots) = w_1w_2w_3\cdots$. If for all $k\in\NN$ one has $0 \le w_k<b$, we say that $W$ is a base-$b$ word. We define $\ot(W)$ to be the order type of the set of shifts of $W$,
\[ \ot(W) \simeq \{ \sigma^k(W) : k \in \NN \},\]
which are linearly ordered. We say that a word $W$ is irrational if it is infinite and there are no two distinct shifts $\sigma_1,\sigma_2$ with $\sigma_1(W) = \sigma_2(W)$. We use exponents as shorthand for repeated subwords, as in \mbox{$(3^501)^2=33333013333301$}. An exponent of $\omega$ indicates an infinite repetition.

An enlightening example shows that the next lemma is best possible. Let $w_i=0$ if $i$ is a triangular number\footnote{Triangular numbers (A000217) have the form $k(k+1)/2$. The first several are $0,1,3,6,10,15$.}, and $w_i=1$ otherwise. That is
  \[ W \coloneqq  0010110111011110\cdots =01^001^101^201^301^4\cdots =  \prod_{k=0}^{\infty} 01^k. \]
The limit points of shifts of $W$ are 
\begin{equation}\label{eq:lims}
  01^\omega,101^\omega, \ldots, 1^k01^\omega, \ldots.
  \end{equation}
As the words in~\eqref{eq:lims} have only one limit point, $1^\omega$, which is not a shift of $W$, and the limit points themselves have order type $\omega$, we find that $\ot(W) = \omega^2$.

\begin{lemma}\label{lem:void}
  Suppose that $X$ is an irrational word base-$b$ word, with $2\leq b <\infty$. Then $\ot(X)$ has infinitely many limit points. In particular, if $\ot(X)$ is an ordinal, then $\ot(X) \succeq \omega^2$.
\end{lemma}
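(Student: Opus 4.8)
The plan is to describe the limit points of $\ot(X)$ combinatorially and then count them with the Morse--Hedlund theorem. Write $S=\{\sigma^k(X):k\in\NN\}$, so that $\ot(X)$ is the order type of the linear order $S$. The first observation is that an infinite word $Y$ over $\{0,\dots,b-1\}$ is a limit point of $S$ precisely when every finite prefix of $Y$ occurs infinitely often in $X$ as a factor: by definition of the distance, the shifts $\sigma^k(X)$ within distance $2^{-n}$ of $Y$ are exactly those starting at a position where the length-$n$ prefix of $Y$ occurs in $X$, so $Y$ is a limit point iff each of its prefixes occurs infinitely often. Let $T$ be the set of finite words occurring infinitely often in $X$. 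Then $T$ is nonempty, closed under taking prefixes, and right-extendable (each of the infinitely many occurrences of a word $u\in T$ is followed by a letter, so by the pigeonhole principle some single letter $c$ has $uc\in T$); iterating right-extendability, every $u\in T$ extends to an infinite word all of whose prefixes lie in $T$. Thus, writing $L=\Lim(S)$ for the set of limit points of $S$, we have $L\neq\emptyset$, and $T$ is exactly the set of prefixes of members of $L$.

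Now suppose for contradiction that $L$ is finite, say $|L|=m\ge 1$. For each length $\ell$ the map sending $Y\in L$ to its length-$\ell$ prefix is a surjection onto the set $T_\ell$ of length-$\ell$ words in $T$ (since $T$ consists of the prefixes of members of $L$), so $|T_\ell|\le m$. Fix $\ell=m$. Since $X$ has only finitely many factors of length $\ell$ and every such factor not in $T_\ell$ occurs only finitely often, there is an $n$ such that every length-$\ell$ factor of $X$ occurring at a position $\ge n$ lies in $T_\ell$; hence $\sigma^n(X)$ has exactly $|T_\ell|\le m=\ell$ distinct factors of length $\ell$. By the Morse--Hedlund theorem (an infinite word over a finite alphabet with at most $\ell$ factors of some length $\ell$ is eventually periodic), $\sigma^n(X)$, and therefore $X$, is eventually periodic; but then two distinct shifts of $X$ coincide, contradicting that $X$ is irrational. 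Hence $L$ is infinite, which is to say $\ot(X)$ has infinitely many limit points. Finally, if $\ot(X)$ is an ordinal, then $S$ is order-isomorphic to that ordinal and its limit points correspond to limit ordinals at or below it; an ordinal with infinitely many limit ordinals at or below it is at least $\omega^2$, so $\ot(X)\succeq\omega^2$.

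The step I expect to require the most care is the passage from ``$L$ finite'' to eventual periodicity. The subtlety is that boundedly many factors of each length occur \emph{infinitely} often, but there may be a great many factors that occur only finitely often, so no conclusion about $X$ itself is available — one must first pass to a deep enough shift $\sigma^n(X)$, after which the surviving length-$\ell$ factors are exactly the recurrent ones and the counting hypothesis of Morse--Hedlund applies. If one prefers not to quote Morse--Hedlund, the same contradiction can be reached directly: beyond the level at which the finitely many infinite words of $L$ pairwise separate, each length-$\ell$ recurrent factor has a unique recurrent one-letter extension, so the successive length-$\ell$ windows of $\sigma^n(X)$ form the forward orbit of a self-map of a finite set and are therefore eventually periodic.
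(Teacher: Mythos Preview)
Your proof is correct and follows essentially the same route as the paper's. Both arguments isolate the set of length-$n$ factors occurring infinitely often (your $T_\ell$, the paper's $S(n)$), show that boundedness of this ``recurrent complexity'' forces eventual periodicity, and deduce infinitely many limit points from its unboundedness. The only cosmetic difference is that you pass to a deep shift $\sigma^n(X)$ so as to quote Morse--Hedlund verbatim, while the paper works directly with the recurrent factors and runs the out-degree-one graph argument on $S(m)$ when $C(m)=C(m+1)$; your final paragraph in fact sketches exactly this direct argument, so the two proofs coincide in substance.
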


This is striking, as Theorem~\ref{thm:every order type arises} states that every order-type can be represented as $x\ast A$ for any irrational $x$ and some $A$. This is a peculiar facet of the ``powers of $b$'' sets.

\begin{proof}
This is consequence of the proof that nonperiodic words have unbounded complexity. We include the proof here as it is a beautiful argument.

Let $S(n)$ be the set of those finite subwords of length $n$ that appear in $X$ infinitely many times, and let $C(n) \coloneqq  |S(n)|$. Clearly $C$ is nondecreasing and, as $X$ is irrational, $C(1)\geq 2$. 

Suppose, by way of contradiction, that $C$ is bounded, i.e., that there is an $n_1$ with $C(n) \leq n_1$ (for all $n$). As $C(1),C(2),C(3),\dots$ is a bounded nondecreasing sequence of natural numbers, there is some $m$ such that $C(m)=C(m+1)$. If $u\in S(m)$, then at least one of $u0,u1,\dots,u(b-1)$ is in $S(m+1)$. As $C(m)=C(m+1)$, however, we know that exactly \emph{one} of $u0,u1,\dots,u(b-1)$  is in $S(m+1)$. Therefore, exactly one of $\sigma(u0),\sigma(u1),\dots,\sigma(u(b-1))$ is in $S(m)$. The graph with vertex set $S(m)$ and a directed edge from each $u$ to whichever of $\sigma(u0),\sigma(u1),\dots,\sigma(u(b-1))$ is in $S(m)$ is finite, connected, and each vertex has out-degree 1. Therefore the graph is a cycle. Consequently, the word $X$ is eventually periodic. This is contradicts the assumption that $X$ is irrational, and so we conclude that $C$ is unbounded.

For each $u\in S(n)$, there are infinitely many shifts of $X$ in the interval
\( ( u0^\omega, u1^\omega) \)
and so by Bolzano-Weierstrauss those shifts have a limit point in the interval \( [ u0^\omega, u1^\omega] \). Therefore $\ot(X)$ has an unbounded number of limit points, which implies that $\ot(X) \succeq \omega^2$.
\end{proof}
  
\begin{lemma}\label{lem:base doesn't matter}
  Let $b,c$ be integers with $b\geq c \geq 2$.
  \begin{enumerate}\renewcommand{\labelenumi}{(\emph{\roman{enumi}})} 
  \item If $W$ is a base-$b$ word and $\ot(W)$ is an ordinal, then there is a base-$c$ word $V$ with $\ot(W)\preceq \ot(V)$, and $\ot(V)$ is an ordinal.\label{i}
  \item If $V$ is a base-$c$ word, then there is a base-$b$ word $W$ with $\ot(W) \simeq \ot(V)$.\label{ii}
  \end{enumerate} 
\end{lemma}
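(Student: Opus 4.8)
For part~(\emph{ii}) the idea is to encode a base-$c$ word letter-by-letter into a base-$b$ word so that the shift structure is preserved up to scaling of distances. Since $b\geq c$, every letter of $V$ is already a legitimate letter of a base-$b$ word, but the naive choice $W=V$ need not be irrational or have the same order type if $b>c$ — actually it does, because the ordering on words only compares at the first coordinate where they differ, and that comparison is insensitive to the ambient alphabet size. So for~(\emph{ii}) I would simply take $W=V$: a base-$c$ word is \emph{a fortiori} a base-$b$ word when $b\geq c$, its shifts are literally the same set of infinite sequences, the order on them is defined identically, and irrationality (no two shifts equal) is a property of the sequence alone. Hence $\ot(W)\simeq\ot(V)$ with nothing to prove beyond this observation. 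The only thing to check is that ``base-$c$ word'' really is subsumed by ``base-$b$ word'' in the paper's definitions, which is immediate from $0\le w_k<c\le b$.

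Part~(\emph{i}) is the substantive direction: we must \emph{compress} a base-$b$ alphabet down to base $c$. The plan is to fix a block code. Choose $\ell$ with $c^\ell\ge b$ and pick an injection $\iota:\{0,1,\dots,b-1\}\hookrightarrow\{0,\dots,c-1\}^\ell$ that is \emph{order-preserving} for the lexicographic (i.e.\ word) order on length-$\ell$ blocks; such an $\iota$ exists since the length-$\ell$ base-$c$ blocks are linearly ordered with $c^\ell\ge b$ elements. Given $W=w_0w_1w_2\cdots$, define $V=\iota(w_0)\iota(w_1)\iota(w_2)\cdots$. The shifts of $V$ fall into two kinds: the ``aligned'' shifts $\sigma^{\ell k}(V)=\iota(w_k)\iota(w_{k+1})\cdots$, which correspond bijectively and order-isomorphically to the shifts $\sigma^k(W)$ (here is where order-preservation of $\iota$ is used, together with the fact that comparing two aligned tails reduces to comparing the first block where they differ), and the ``misaligned'' shifts $\sigma^{\ell k+j}(V)$ for $0<j<\ell$. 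The key claim is that each misaligned shift sits in the order between two consecutive aligned shifts, or is itself eventually equal to — no, strictly: one shows $\sigma^{\ell k+j}(V)$ lies strictly between $\sigma^{\ell k}(V)$-neighbourhood points, so that inserting all the misaligned shifts among the aligned ones can only insert finitely many points into each ``gap'' and, crucially, inserts them at \emph{non-limit} positions relative to the aligned copy. Since $\ot(W)$ is an ordinal, $\ot(V)$ is then also linearly ordered with no infinite descending chain: any descending chain in $V$'s shifts would, after passing to a subsequence lying in a single congruence class $j\bmod\ell$, project to a descending chain in a shifted copy of $W$'s shifts. So $\ot(V)$ is an ordinal, and the order-embedding of the aligned shifts gives $\ot(W)\preceq\ot(V)$. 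Finally, irrationality of $V$ follows from irrationality of $W$: if two shifts of $V$ agreed, reading off the blocks (possibly after noting the offsets must be congruent mod $\ell$, which follows by a short argument comparing periods) would force two shifts of $W$ to agree.

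The main obstacle I anticipate is the bookkeeping around misaligned shifts in part~(\emph{i}): one must argue carefully that they do not create new limit points that would destroy well-ordering, and that they do not collapse distinct aligned shifts together. Concretely, the delicate point is that a misaligned shift $\sigma^{\ell k+j}(V)$ begins with a \emph{suffix} of the block $\iota(w_k)$, and such suffixes are not in the image of $\iota$, so one cannot directly compare it to aligned shifts via $\iota$; instead one compares raw base-$c$ words and shows the misaligned shift is trapped between the two aligned shifts $\sigma^{\ell k}(V)$ and $\sigma^{\ell(k+1)}(V)$ (or between one of these and $1^\omega$-type limits) in a way that contributes at most $\ell-1$ new points per aligned point. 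I would handle this by a direct case analysis on the first coordinate of disagreement, using that the block lengths are all exactly $\ell$ so that alignment is detectable from the period. An alternative, cleaner route — if the case analysis gets unwieldy — is to interleave a fixed marker: use $\ell+1$ with an extra ``$0$'' coordinate reserved as a block separator so that aligned positions become recognisable intrinsically, at the cost of needing $c\ge 2$ (which we have) and a slightly larger $\ell$; then misaligned shifts are distinguished by never having the separator pattern in the right place, and the comparison becomes transparent.
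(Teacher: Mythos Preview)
Your part~(\emph{ii}) is identical to the paper's: a base-$c$ word \emph{is} a base-$b$ word, and nothing more is said.

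For part~(\emph{i}) your primary route---a fixed-length, order-preserving block code $\iota:\{0,\dots,b-1\}\hookrightarrow\{0,\dots,c-1\}^\ell$---is valid but genuinely different from what the paper does. The paper uses the \emph{variable-length} morphism $D(d)=01^{d+1}$, sending any base-$b$ word directly to a base-$2$ word (which is then automatically a base-$c$ word). The letter $0$ acts as a block separator: in $D(W)$ there are no two consecutive $0$'s and no run of more than $b$ consecutive $1$'s, so from a hypothetical infinite descending sequence of shifts one passes to a subsequence with common prefix $1^k0$, shifts that prefix off, and is left with shifts beginning with $0$---which are exactly the images $D(\sigma^m W)$ of shifts of $W$. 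This is essentially your ``alternative, cleaner route,'' except that the paper allows varying block lengths, which costs nothing.

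Your fixed-length approach also works, but your descending-chain argument has a small gap. Passing to a single congruence class $j\bmod\ell$ is not by itself enough: within class $j$, the shift $\sigma^{\ell k+j}(V)$ begins with the length-$(\ell-j)$ suffix of $\iota(w_k)$, and different $w_k$ can yield different such suffixes, so comparing two class-$j$ shifts need not reduce to comparing the corresponding shifts of $W$. The fix is one more pigeonhole: there are at most $b$ possible suffixes, so pass to a further subsequence on which this suffix is constant; strip the common prefix, and the remaining tails $\iota(\sigma^{k+1}W)$ give, via order-preservation of $\iota$, a genuine descending chain in the shifts of $W$. Your side remarks that each misaligned shift sits ``between two consecutive aligned shifts'' and that one inserts ``at most $\ell-1$ new points per gap'' are not correct as stated (a misaligned shift can land below every aligned shift, for instance) and are not needed for the argument.

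What each approach buys: the paper's separator morphism makes the well-ordering verification a two-line affair and lands in base~$2$ regardless of $c$; your block code keeps block length fixed, which makes the order-embedding $\ot(W)\hookrightarrow\ot(V)$ on aligned shifts completely transparent, at the price of the extra pigeonhole step for the misaligned ones.
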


\begin{proof}
Part (\emph{ii}) is obvious, as a base-$c$ word \emph{is} a base-$b$ word. 
  
Let $W$ be a base-$b$ word with $\ot(W)$ an ordinal. Let $D$ be a word morphism\footnote{Word morphisms are defined on letters, but apply to words letter-by-letter.} defined by $D(d)=01^{d+1}$. For example,
  $$D(0130)=D(0)D(1)D(3)D(0) = (01)(011)(01111)(01) = 010110111101.$$
We note that $D(W)$ is a base-2 word.
  
First, we argue that $V\coloneqq D(W)$ is an ordinal. By way of contradiction, suppose that $v_0,v_1,\ldots$ is an infinite decreasing subsequence of shifts of $V$. In $V$, there are never consecutive 0's, and never $b+1$ consecutive 1's. Therefore, we can pass to an infinite subsequence of $(v_i)$ all of which start with $1^k0$ for some fixed $k$ with  $0\leq k \leq b$. If every word of the sequence starts with the same letter, we can shift that starting letter into oblivion without altering the decreasing property of the sequence. Therefore, without loss of generality, every one of the $v_i$ begins with a 0. But then, each $v_i$ is exactly the image (under $D$) of a shift of $W$, and as $\ot(W)$ is an ordinal, there is no infinite such sequence.

But clearly $x<y$ if and only if $D(v)<D(y)$, so that \mbox{$\ot(W) \preceq \ot(D(V))$}.
\end{proof}

\begin{lemma}\label{lem:big ordinals}
  Suppose that $w_0,w_1,\ldots$ is a sequence of words with $\ot(w_i)$ an ordinal for every $i\in\NN$. There is a base-$2$ word $W$ such that for every $i$, we have $\ot(w_i) \preceq \ot(W)$.
\end{lemma}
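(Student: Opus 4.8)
The plan is to reduce to base-$2$ words via part~(\emph{i}) of Lemma~\ref{lem:base doesn't matter}, and then to assemble $W$ by concatenating carefully chosen \emph{finite} prefixes of those words, in such a way that, for each $i$, the shifts of $W$ sitting at the starts of the blocks coming from $v_i$ reproduce the entire well-order of shifts of $v_i$.

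\emph{Reduction.} If every $\ot(w_i)$ is finite we may simply take $W=\prod_{k\ge 0}01^k$, since $\ot(W)=\omega^2$ dominates every finite ordinal. Otherwise, for each $i$ with $\ot(w_i)$ infinite, part~(\emph{i}) of Lemma~\ref{lem:base doesn't matter} gives a base-$2$ word $v_i$ with $\ot(w_i)\preceq\ot(v_i)$ and $\ot(v_i)$ an ordinal; since $\ot(v_i)\succeq\omega$, this $v_i$ has infinitely many distinct shifts, hence is irrational. To also cover the indices with $\ot(w_i)$ finite (each such $\ot(w_i)\preceq\omega^2$), we additionally place the word $\prod_k 01^k$ into our list. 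We are thus reduced to: given a nonempty collection of irrational base-$2$ words $v_0,v_1,\dots$, each of which is an ordinal, produce a base-$2$ word $W$ with $\ot(v_i)\preceq\ot(W)$ for all $i$.

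\emph{Key claim.} If $V$ is an irrational word whose shifts are well-ordered by $<$, then every shift of $V$ is topologically isolated among the shifts of $V$; equivalently, for each $k$ the number $P_V(k):=\sup_{l\ne k}\bigl(\text{first coordinate at which }\sigma^k(V)\text{ and }\sigma^l(V)\text{ disagree}\bigr)$ is finite. Suppose not. First observe that no shift of $V$ can be a limit, in the product topology, of shifts lying strictly above it: passing to a subsequence along which the agreement with that limit strictly increases produces an infinite strictly decreasing sequence of shifts, impossible in a well-order. Hence if $\sigma^k(V)$ is not isolated it is a topological limit of (infinitely many distinct) shifts lying strictly below it; applying the continuous shift map and re-invoking the same observation, every $\sigma^j(V)$ with $j\ge k$ is likewise a topological limit of infinitely many distinct shifts of $V$. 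Now let $\mu$ be the $<$-least element of $\{\sigma^j(V):j\ge k\}$. As a limit of infinitely many distinct shifts all $<\mu$, those shifts must lie in the finite set $\{\sigma^0(V),\dots,\sigma^{k-1}(V)\}$ — a contradiction. I expect this claim — turning the order-theoretic hypothesis ``the orbit is a well-order'' into the topological statement ``every orbit point is isolated'' — to be the main obstacle; the rest is bookkeeping.

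\emph{Construction and verification.} For each $i$ and $k\in\NN$ set $L_{i,k}:=k+P_{v_i}(k)+1$ and let $B_{i,k}$ be the length-$L_{i,k}$ prefix of $v_i$. Fix a bijection $t\mapsto(i_t,k_t)$ of $\NN$ onto the set of all such pairs, and put
\[ W:=B_{i_0,k_0}\,B_{i_1,k_1}\,B_{i_2,k_2}\cdots, \]
a base-$2$ word. Fix $i$; for the (unique) $t=t(k)$ with $i_t=i$ and $k_t=k$, let $\beta_{t}$ be the position in $W$ at which the block $B_{i,k}$ begins. Then $\sigma^{\beta_{t}+k}(W)$ opens with the $P_{v_i}(k)+1$ letters $(v_i)_{k}(v_i)_{k+1}\cdots(v_i)_{L_{i,k}-1}$, after which it diverges from $\sigma^{k}(v_i)$. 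For $k\ne k'$ the words $\sigma^{k}(v_i)$ and $\sigma^{k'}(v_i)$ first disagree at some coordinate $p$ with $p\le P_{v_i}(k)$ and $p\le P_{v_i}(k')$, so $p$ lies strictly inside the opening $v_i$-segments of both $\sigma^{\beta_{t(k)}+k}(W)$ and $\sigma^{\beta_{t(k')}+k'}(W)$; hence those two shifts of $W$ first disagree at the same coordinate $p$, in the same direction. Therefore $k\mapsto\sigma^{\beta_{t(k)}+k}(W)$ is an order-embedding of the shifts of $v_i$ into the shifts of $W$, giving $\ot(v_i)\preceq\ot(W)$, as required. (One may, if desired, insert a fixed word between consecutive blocks; it changes nothing.)
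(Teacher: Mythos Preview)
Your Key Claim is correct and the argument for it is clean: the passage from ``$\sigma^k(V)$ is not isolated'' to ``every $\sigma^j(V)$ with $j\ge k$ is a limit from below'' via the shift map, followed by looking at the $<$-minimum of $\{\sigma^j(V):j\ge k\}$, works exactly as you say. The embedding $k\mapsto \sigma^{\beta_{t(k)}+k}(W)$ is also verified correctly.

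The gap is that you never show $\ot(W)$ is an ordinal. That is the entire content of the lemma. Read literally without that requirement, the statement is trivial: any base-$2$ word containing every finite binary string infinitely often has shift-set of order type $\eta$ (the rationals), and every countable order type---in particular every countable ordinal---embeds into $\eta$. Your elaborate construction then proves strictly less than this one-line observation. In the paper, $\preceq$ is used only between ordinals, and the application to Theorem~\ref{thm:powers} (arbitrarily large countable well-orders arise as $x\ast B$) genuinely needs $\ot(W)$ to be a well-order: from $\ot(w_n)\succeq\alpha_n$ with $\alpha_n\nearrow\alpha$ one wants to conclude $\ot(W)\succeq\alpha$, and that step fails if $\ot(W)$ is merely some linear order containing copies of each $\alpha_n$.

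Moreover, your $W$ should not be expected to be well-ordered. It is a concatenation of finite binary blocks in an order governed by an arbitrary bijection $t\mapsto(i_t,k_t)$; the subwords straddling block junctions are completely uncontrolled, and your parenthetical remark that inserting a fixed separator ``changes nothing'' is true for the embedding but false for well-foundedness. The paper's proof is almost entirely devoted to exactly this issue: it passes to a larger alphabet, uses the letters $0$ and $3$ as genuine delimiters (with the $3$-blocks of strictly increasing length), encodes $w_i$ via a morphism $B_i$ that tags which $w_i$ a subword came from, and then carries out a careful case analysis showing that any putative infinite descending chain of shifts of the assembled word can be pushed back to an infinite descending chain in some single $w_i$. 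That well-foundedness argument is the heart of the lemma, and it is missing from your proposal.
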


\begin{proof}
By Lemma~\ref{lem:base doesn't matter}, we can assume that the $w_i$ are base-2 words. Let $B_i(w)$ be the morphism (mapping base-2 words into base-3 words) that maps $0\mapsto  12^{i+1}$ and $1\mapsto 22^{i+1}$. In other words, $B$ sticks $i+1$ letter 1's between each pair of letters, and then replaces $0,1$ with $1,2$. Each $B_i(w_i)$ is an ordinal, and $\ot(w_i) \preceq \ot(B_i(w_i))$.  

Let $x_0=1,x_1=2,x_2,\ldots$ be the set of all finite subwords of all of the $B_i(w_i)$, organized first by length, and second by the order $<$ defined at the beginning of this section. Set
   $$V \coloneqq \prod_{k=0}^\infty 3^k x_k 0 = (x_00)(3x_10)(33x_20)\cdots.$$
We claim that $V$ is an ordinal, and that for each $i$, $\ot(B_i(w_i)) \preceq \ot(V)$.

Suppose, by way of contradiction, that $v_0,v_1,v_2,\dots$ is an infinite decreasing sequence of shifts of $V$. By passing to a subsequence, we may assume that each of $v_0,v_1,v_2,\ldots$ begins with the same letter. 

If they all begin with 3, the length of the initial string 3's must be nonincreasing (as $v_0,\dots$ is a decreasing sequence), and so by passing to a subsequence we may assume that each $v_i$ begins with a string of 3's of the same length.  If every one of a list of words begins with the same letter, applying the shift map does not change the ordering. In particular, we can apply the shift map to all of $v_0,v_1,v_2,\ldots$, and so we may assume that none of the $v_i$ begin with 3.

The shifts of $V$ that begin with 0 begin with $03^kx$ for some $k$ and some $x\in\{1,2\}$, and each $k$ only happens once. Therefore, there are no infinite decreasing sequences that all start with 0. By passing to a subsequence, we may assume that either all of $v_0,v_1,\ldots$ begin with 1, or all begin with a 2.

Assume, for the moment, that all begin with 2. In $V$, each string of 2's can be followed by either a 0 or a 1, and so the length of the initial string of 2's is nonincreasing. By passing to a subsequence, we can assume that the initial strings of 2's all have the same length.  By shifting, we come to an infinite decreasing subsequence of shifts of $V$, all of which begin with 0 or 1. By passing to a subsequence again, they all begin with 1.

That is, without loss of generality, all of $v_0,v_1,v_2,\dots$ begin with 1. The 1's all come from $x_i$'s, which come from some $B_j(w_j)$'s, but in $B_j(w_j)$ each 1 is followed by $2^{j+1}$. As the $v_0,v_1,v_2,\dots$ sequence is decreasing, the length of the string of 2's following the initial 1 is nonincreasing. By passing to a subsequence, we may assume that all of $v_0,v_1,v_2,\ldots$ begin with $12^{k+1}x$ for some nonnegative $k$ and some $x\in\{0,1\}$. But this means that each $v_i$ starts in a subword of $B_k(w_k)$.  As $B_k(w_k)$ is an ordinal, the position of the first 0 is bounded. By passing to a subsequence, that position is the same in every $v_0,v_1,v_2,\ldots$, and by shifting, each of $v_0,v_1,\dots$ begins with 0. But as noted above, there aren't infinite descending sequences in which every $v_i$ begins with 0.

Thus, $\ot(V)$ is an ordinal. By Lemma~\ref{lem:base doesn't matter}, there is a base-2 word $W$ with $\ot(V) \preceq \ot(W)$. That is, for each $i$
  \[\ot(w_i) \preceq \ot(B_i(w_i)) \preceq \ot(V) \preceq \ot(W).\]
\end{proof}

Theorem~\ref{thm:powers} follows immediately from Lemma~\ref{lem:big ordinals} and Lemma~\ref{lem:void}. 

\begin{bibdiv}
  
  \begin{biblist}
    \bib{Baez1}{webpage}{
      author={Baez, John},
      title={Azimuth: large countable ordinals (part 1)},
      date={June 29, 2016},
      url={https://johncarlosbaez.wordpress.com/2016/07/07/large-countable-ordinals-part-1/},
      }
    \bib{Baez2}{webpage}{
      author={Baez, John},
      title={Azimuth: large countable ordinals (part 2)},
      date={July 4, 2016},
      url={https://johncarlosbaez.wordpress.com/2016/07/07/large-countable-ordinals-part-2/},
      }
    \bib{Baez3}{webpage}{
      author={Baez, John},
      title={Azimuth: large countable ordinals (part 3)},
      date={July 7, 2016},
      url={https://johncarlosbaez.wordpress.com/2016/07/07/large-countable-ordinals-part-3/},
    }
    \bib{MR662564}{book}{
   author={Rosenstein, Joseph G.},
   title={Linear Orderings},
   series={Pure and Applied Mathematics},
   volume={98},
   publisher={Academic Press, Inc. [Harcourt Brace Jovanovich, Publishers],
   New York-London},
   date={1982},
   pages={xvii+487},
   isbn={0-12-597680-1},
   }
    \bib{Weyl}{article}{
      author={Weyl, Hermann},
      title={{\"U}ber die Gleichverteilung von Zahlen mod. Eins.},
      language={German},
      journal={Math. Ann.},
      date={1916},
      volume = {77},
      pages={313--352},
    }
    
  \end{biblist}
\end{bibdiv}

\end{document}